\def\?[#1]{\textbf{[#1]}\marginpar{\Large{\textbf{??}}}}
\newtheorem{theo}{Theorem}
\newtheorem{prop}{Proposition}[section]
\newtheorem{defi}[prop]{Definition}
\newtheorem{lemm}[prop]{Lemma}
\numberwithin{equation}{section}
\newcommand{\RR}{\mathbb{R}}
\newcommand{\NN}{\mathbb{N}}
\newcommand{\ZZ}{\mathbb{Z}}
\newcommand{\OO}{\mathcal{O}}
\newcommand{\KE}{\mathcal{E}}
\newcommand{\KF}{\mathcal{F}}
\newcommand{\DIM}[1]{dim #1 }
\def\squarebox#1{\hbox to #1{\hfill\vbox to #1{\vfill}}}
\title[Poisson Structure on manifolds with corners]{Poisson Structure on manifolds with corners}
\author{Joel Antonio-V\'asquez}
\email{hello@joelantonio.me}
\begin{document}

\begin{abstract}
Since a Poisson Structure is a smooth bivector field, 
we use a ring-valued sheaf $\OO_{X}$ on a manifold with corners $X$,
we can interpret $\OO_{X}(U)$ as the ring of admissible smooth
functions where $U$ is an open subset on $X$, in this way, a
poisson structure on $(X, \OO_{X})$ is a sheaf morphism 
$$
\{-, -\}: \OO_{X} \times \OO_{X} \longrightarrow \OO_{X}
$$
which satisfies the Leibniz rule an also the Jacobi Identity.
\end{abstract}

\maketitle


\section{Introduction}
\label{in}

It's very know how can we describe a Poisson Structure on a smooth manifold.
Let $M$ be a smooth manifold, a poisson structure on $M$ is a smooth bivector field
$\pi$ on $M$ satisfying $[\pi, \pi]_{s} = 0$, where $[\cdot, \cdot]_{s}$ is
the Schouter bracket. Let $U$ be an open subset of $M$ an let
$F, G \in C^{\infty}(U)$ be smooth functions,
the bilinear operation $\{\cdot, \cdot\}_{U}$ is defined by 

\begin{equation}
\label{eq:b1}
\{F, G\}_{U}(m) = \langle d_{m}F \wedge d_{m}G, \pi_{m} \rangle
\end{equation}

for all $m \in M$, which satisfies the Jacobi Identity and define a Lie algebra structure on
$C^{\infty}(U)$. A lot of work already is done in classical mechanics and quantum mechanics
in \cite{lapiva}, \cite{arnold}.

Working on singular spaces is really interesting, Sorokina proposed an algorithm to describe
a poisson structure on manifolds with singularities \cite{sor}, where the spectrum of the
algebra coincides with the configuration space, it's determined by the geometry of the
singularity and described directly as the pullback.

From now, we'll focus on working on a non-smooth manifold $X$ with the following definition

\begin{defi}
\label{d:firstd}
Let $\RR^{n}_{k} = [0, \infty)^{k} \times \RR^{n-k}$. The singular space $X$ is a second countable
Hausdorff topological space in $n$ dimensions modeled on $\RR^{n}_{k}$.
\end{defi}

Let $(\NN^{k} \times \ZZ^{n-k}, U, \phi)$ and $(\NN^{l} \times \ZZ^{n-l}, V, \psi)$ be charts on $X$
where $U, V \subset X$, we notice that $\NN^{p} \times \ZZ^{n-p} \cong [0, \infty)^{p} \times \RR^{n-p}$,
where $(\NN^{k} \times \ZZ^{n-k}, U, \phi)$ is the maximal atlas of charts $(U, \phi)$ on $X$,
which in the paper of Joyce is called a manifold with corners \cite[\S 2]{joyce1}.
Joyce first presented a theory of manifolds with corners which includes a new notion of smooth map
$f: X \longrightarrow Y$ between manifolds with corners \cite{joyce1}, later he wrote a generealization
called manifolds with g-corners, extending manifolds with corners \cite{joyce2}.

The author is grateful to \textbf{Maria Sorokina} who suggested the problem consider and for plenty of helpful discussions.

\section{Definitions}

We recall Definition \ref{d:firstd}, a $n$-dimensional manifold with corners is a paracompact Hausdorff Topological space $X$ equipped with a maximal
$n$-dimensional atlas with corners.

\begin{defi}
\label{d:secondd}
Let $X$ be a $n$-manifold with corners and $(U, \phi)$ a chart on the manifold $X$ where $U \subset X$, then $f: X \longrightarrow \RR$ is a smooth map. We can define a smooth map between subsets of $\RR^{n}, \RR$ in the sense of \cite[\textsection 2.1]{joyce1} such that $f \circ \phi: U \longrightarrow \RR$.  We write $C^{\infty}(X)$ for the $\RR$-algebra of smooth function $f:X \longrightarrow \RR$.
\end{defi}

\begin{defi}
\label{d:thirdd}
Let $X, Y$ and $Z$ be manifolds with corners and let $f: X \longrightarrow Z$, $g: Y \longrightarrow Z$ be smooth maps, we define a fibre product $W = X \times_{f, Y, g} Z$ of manifolds with corners, such the dimension of $W$ is $n$, where $n = \DIM{X} + \DIM{Y} - \DIM{Z}$. 
\end{defi}

\begin{lemm}
  \label{l:firstl}
We can describe $W = \{ (x, y) \in X \times Y : f(x) = g(y) \}$ such that $W \subseteq X \times Y$ because Definition \ref{d:thirdd}. Let $\pi_{x}: W \longrightarrow X$ and
$\pi_{y}: W \longrightarrow Y$ be map proyections, let us consider $(U, \phi)$ be the maximal atlas on $W$, where $W \subseteq \RR^{n}_{k}$. If $\phi: U \longrightarrow W$, then $\pi_{x} \circ \phi: U \longrightarrow X$ and $\pi_{y} \circ \phi: U \longrightarrow Y$ are smooths maps for all $u \in U$ with $\phi(u) = (x, y)$, in this way

\begin{equation}
  \label{secondeq}
  d(\pi_{x} \circ \phi) |_{u} \oplus d(\pi_{y} \circ \phi) |_{u}: T_{u}U = \RR^{n} \longrightarrow T_{x}X \oplus T_{y}Y
\end{equation}

\begin{figure}[H]
  \centering
  \begin{tikzpicture}[baseline= (a).base]
    \node[scale=1.3] (a) at (0, 0){
    \begin{tikzcd}
    U\arrow[leftarrow]{drr}{\phi \ \circ \ \pi_{y}}
    \arrow[rightarrow]{dr}[description]{\phi}
    \arrow[dotted]{ddr}[description]{\phi \ \circ \ \pi_{x}} & & \\
                                                             & W \arrow{r}[swap]{\pi_{y}} \arrow{d}{\pi_{x}} & Y \arrow{d}[swap]{g} \\
                                           & X \arrow{r}{f} & Z
  \end{tikzcd}
  };
  \end{tikzpicture}
\end{figure}

Since $W = X \times_{f, Z, g} Y$, then we have a pushout square of $C^{\infty}$-rings.
\end{lemm}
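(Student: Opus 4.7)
The plan is to establish the lemma by applying the contravariant functor $X \mapsto C^{\infty}(X)$ to the fibre-product diagram of Definition~\ref{d:thirdd} and verifying that the resulting square is a pushout in the category of $C^{\infty}$-rings. The work splits naturally into three tasks: checking the set-theoretic description of $W$, interpreting equation~\eqref{secondeq} as a transversality condition in the sense of Joyce, and proving the universal property for the $C^{\infty}$-ring pushout.

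For the set-theoretic description, the universal property of the fibre product in Definition~\ref{d:thirdd} applied to a one-point manifold identifies maps $\mathrm{pt}\to W$ with pairs $(x,y)\in X\times Y$ satisfying $f(x)=g(y)$. Hence $W\subseteq X\times Y$ as claimed, and the projections $\pi_x,\pi_y$ are the restrictions of the ambient projections of $X\times Y$. Equation~\eqref{secondeq} is then the transversality of $f$ and $g$: the displayed linear map identifies $T_uU=\RR^n$ with the kernel of $(df,-dg):T_xX\oplus T_yY\to T_zZ$, and the equality $\dim W=\dim X+\dim Y-\dim Z$ is exactly what Joyce's framework \cite{joyce1} requires so that $W$ carries the structure of a manifold with corners near $(x,y)$.

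For the pushout statement, apply $C^{\infty}(-)$ to obtain a commuting square with $C^{\infty}(Z)$ at the initial corner mapping via $f^*$ and $g^*$ into $C^{\infty}(X)$ and $C^{\infty}(Y)$, and these mapping via $\pi_x^*$ and $\pi_y^*$ into $C^{\infty}(W)$. To check universality, take an arbitrary $C^{\infty}$-ring $R$ with morphisms $\alpha:C^{\infty}(X)\to R$ and $\beta:C^{\infty}(Y)\to R$ satisfying $\alpha\circ f^*=\beta\circ g^*$, and produce a unique $\gamma:C^{\infty}(W)\to R$ with $\gamma\circ\pi_x^*=\alpha$ and $\gamma\circ\pi_y^*=\beta$. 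The construction is local: using a partition of unity subordinate to the atlas $(U,\phi)$ on $W$, every $h\in C^{\infty}(W)$ is a finite sum of products $F_i(x)G_i(y)$ restricted from a neighborhood in $X\times Y$, and $\gamma$ is forced to send $h$ to $\sum_i\alpha(F_i)\beta(G_i)\in R$. Uniqueness is immediate from the fact that a $C^{\infty}$-ring morphism is determined by its restriction to a generating set together with commutativity with all smooth operations.

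The main obstacle is verifying that the prescription defining $\gamma$ is well-defined and respects not merely the $\RR$-algebra structure but the full $C^{\infty}$-ring structure. Independence of the extension data and of the partition of unity requires that every relation between functions on $W$ descend from relations on $X\times Y$ modulo the ideal generated by $f^*(C^{\infty}(Z))-g^*(C^{\infty}(Z))$; this is precisely where the transversality encoded in~\eqref{secondeq} enters, via an implicit-function-theorem argument in the corner charts of Joyce. The resulting isomorphism $C^{\infty}(W)\cong C^{\infty}(X)\otimes^{\infty}_{C^{\infty}(Z)}C^{\infty}(Y)$ holds in the category of $C^{\infty}$-rings rather than merely commutative $\RR$-algebras, because the tensor product must be $C^{\infty}$-completed to accommodate operations indexed by all smooth $\RR^n\to\RR$, and without transversality this isomorphism would fail.
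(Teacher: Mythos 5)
The paper itself supplies no proof of this lemma: the fibre-product description, the differential \eqref{secondeq}, and the pushout square are simply asserted, with the reader implicitly referred to Joyce's framework \cite{joyce1}. So there is no argument in the paper to compare yours against line by line; judged on its own terms, your outline follows the standard route (apply $C^{\infty}(-)$, verify the universal property), but it contains one step that is false as written and it leaves the genuinely hard point as a gesture. The false step is the claim that, after a partition of unity, every $h\in C^{\infty}(W)$ is a \emph{finite} sum of products $F_i(x)G_i(y)$ restricted from $X\times Y$. Smooth functions on a product are not finite sums of products of functions of the factors (e.g.\ $\sin(xy)$ on $\RR^2$), and partitions of unity only produce locally finite sums; this is precisely the reason one must work with the $C^{\infty}$-completed tensor product, a point you acknowledge in your last sentence but which contradicts your construction of $\gamma$ by $h\mapsto\sum_i\alpha(F_i)\beta(G_i)$. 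The repair is to define $\gamma$ not on an algebraic tensor product but via the $C^{\infty}$-ring structure: $C^{\infty}(X\times Y)$ is the coproduct of $C^{\infty}(X)$ and $C^{\infty}(Y)$ in the category of $C^{\infty}$-rings (this is a theorem, not a formality; see \cite{joyce3,joyce4}), so $\alpha,\beta$ induce a unique morphism $C^{\infty}(X\times Y)\to R$, and one must then show this factors through the restriction $C^{\infty}(X\times Y)\to C^{\infty}(W)$.

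That factorization is the actual content of the lemma, and your proposal only names it ("an implicit-function-theorem argument in the corner charts") without carrying it out. Concretely one needs: (i) $W$ is a closed embedded submanifold-with-corners of $X\times Y$, so that restriction $C^{\infty}(X\times Y)\to C^{\infty}(W)$ is surjective (an extension argument, nontrivial at corner strata); and (ii) the kernel of this restriction is exactly the ideal generated by $\{\pi_x^*f^*(c)-\pi_y^*g^*(c):c\in C^{\infty}(Z)\}$, which is proved by choosing transversality-adapted charts and applying Hadamard's lemma, and which is exactly where \eqref{secondeq} must be upgraded from "a map exists" to the statement that $T_uU$ is carried isomorphically onto the kernel of $df-dg$. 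Note also that the lemma as stated never assumes transversality --- Definition \ref{d:thirdd} merely posits the dimension formula, and the paper only introduces transversality afterwards in Definition \ref{d:fourthd} --- so without adding that hypothesis both $W$ being a manifold with corners and step (ii) can fail, and the pushout claim with it. Your uniqueness argument, by contrast, is essentially fine once surjectivity in (i) is known, since the images of $C^{\infty}(X)$ and $C^{\infty}(Y)$ then generate $C^{\infty}(W)$ as a $C^{\infty}$-ring.
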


\begin{figure}[H]
  \centering
  \begin{tikzpicture}[baseline= (a).base]
  \node[scale=1.3] (a) at (0, 0){
      \begin{tikzcd}
        C^{\infty}(Z) \arrow{r}{g^{*}} \arrow{d}{f^{*}}
        & C^{\infty}(Y) \arrow{d}{\pi_{y}^{*}}\\
        C^{\infty}(X) \arrow{r}{\pi_{x}^{*}} & C^{\infty}(W)
      \end{tikzcd}
    };
  \end{tikzpicture}
\end{figure}

Such that $C^{\infty}(W) = C^{\infty}(X) {\scriptstyle\coprod}_{f^{*}, C^{\infty}(Z), g^{*}} C^{\infty}(Y)$.

\begin{defi}
  \label{d:fourthd}
Let $X, Y$ be manifolds with corners and let $Z$ a manifold without boundary in the sense of \cite[\textsection 2]{joyce1}, if $f: X \longrightarrow Z$ and $g: Y \longrightarrow Z$ then $f \circ i_{X}: \partial X \longrightarrow Z$ and $g: Y \longrightarrow Z$ are transverse
\end{defi}

\begin{equation}
  \label{thirdeq}
  \partial(X \times_{f, Z, g} Y) \cong (\partial X \times_{f \circ i_{X}, Z, g} Y) {\scriptstyle\coprod} (X \times_{f, Z, g \circ i_{Y}} \partial Y).
\end{equation}

\begin{defi} (\textbf{Presheaf})
\label{d:fifthd}
Let $X$ be a manifold with corners. A presheaf of an abelian group on $X$ consists of two sets of data:

\begin{itemize}
  \item{
      \textbf{Sections over open sets}, for each open set $U \subseteq X$ an abelian group $\Gamma(U)$, also written $\Gamma(U, F)$. The elements of $F(U)$ are called \textbf{sections} of $F$ over $U$.
    }
  \item{\textbf{Restriction maps}, for every inclusion $V \subseteq U$ of open sets in $X$ a group homomorphism $p^{U}_{V}: F(U) \longrightarrow F(V)$ subjected to the conditions 

      $$
        p^{U}_{W} = p^{V}_{W} \circ p^{U}_{V}
      $$ 

      for all sequences $W \subseteq V \subseteq U$ of inclusions of open sets in $X$. The maps $p^{U}_{V}$ are called \textbf{restriction maps} and if $s$ is  a section over $U$, the restriction $p^{U}_{V}(s)$ is often written as $s|_{V}$.
    }
\end{itemize}
\end{defi}

\begin{lemm}
  \label{l:secondl}
Suppose that $\KE, \KF$ are presheaves of abelian groups in $X$. A morphism $\phi: \KE \longrightarrow \KF$ such that $\phi(U): \KE(U) \longrightarrow \KF(U)$ for all open $U \subseteq X$, such that following diagram commutes for all open $V \subseteq U \subseteq X$

\begin{figure}[H]
  \centering
  \begin{tikzpicture}[baseline= (a).base]
  \node[scale=1.3] (a) at (0, 0){
      \begin{tikzcd}
        \KE(U) \arrow{r}{\phi(U)} \arrow{d}{p_{UV}}
        & \KF(U) \arrow{d}{p'_{UV}}\\
        \KE(U) \arrow{r}{\phi(V)} & \KF(U)
      \end{tikzcd}
    };
  \end{tikzpicture}
\end{figure}

where $p_{UV}$ is the restriction map for $\KE$, and $p'_{UV}$ is the restriction map for $\KF$.
\end{lemm}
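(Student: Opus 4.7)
The statement is really a definition dressed as a lemma; the content to be verified is that the single commutative square is compatible with chains of inclusions and with the abelian-group structure, so that the data $\{\phi(U)\}_{U}$ really does assemble into a morphism in a category of presheaves. My plan has three steps.

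First, I would extend the given commutativity from single inclusions $V \subseteq U$ to arbitrary chains. Fix $W \subseteq V \subseteq U$ of open sets in $X$ and paste together the two squares guaranteed by the hypothesis, one for the pair $V \subseteq U$ and one for the pair $W \subseteq V$. Using the transitivity axiom $p^{U}_{W} = p^{V}_{W} \circ p^{U}_{V}$ from Definition \ref{d:fifthd}, and the analogous identity for the restrictions $(p')^{U}_{V}$ of $\KF$, the outer rectangle of the pasted diagram becomes exactly the compatibility square for $W \subseteq U$. So the hypothesis on pairs propagates to every chain.

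Second, I would check that the construction respects the abelian-group structure in the only possible way. Each $\phi(U)$ is a group homomorphism by assumption, and each $p^{U}_{V}$ and $(p')^{U}_{V}$ is a group homomorphism by Definition \ref{d:fifthd}; commutativity of the square is thus a statement about group homomorphisms, and no extra linearity hypothesis is needed.

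Third, I would interpret the outcome functorially. Treating $\KE, \KF$ as contravariant functors from the poset of open sets of $X$ to $\mathrm{Ab}$, the family $\{\phi(U)\}_{U}$ is precisely a natural transformation $\KE \Rightarrow \KF$; this simultaneously explains why composition of such morphisms is well-defined componentwise and why identities exist, so the presheaves of abelian groups on $X$ form a category. The main potential obstacle is the first step, the chain compatibility, but it reduces to a diagram chase once the restriction-transitivity of Definition \ref{d:fifthd} is in place, so no genuine difficulty arises.
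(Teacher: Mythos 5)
Your reading of the statement is the right one: this ``lemma'' is really the definition of a morphism of presheaves, and the paper in fact supplies no proof of it at all, so there is no argument of the author's to compare yours against line by line. What you provide --- pasting the squares for $W \subseteq V$ and $V \subseteq U$ and invoking the transitivity $p^{U}_{W} = p^{V}_{W} \circ p^{U}_{V}$ from Definition \ref{d:fifthd} (together with its analogue for $\KF$) to obtain compatibility along arbitrary chains, observing that every map in sight is already a group homomorphism so no extra linearity needs to be imposed, and then packaging the family $\{\phi(U)\}_{U}$ as a natural transformation between contravariant functors from the poset of open sets to abelian groups --- is the standard and correct verification that the data is coherent and that presheaves of abelian groups on $X$ form a category with these morphisms. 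One small point worth flagging: the diagram as printed in the paper contains typographical errors (the lower-left entry should be $\KE(V)$ and the right-hand column should terminate in $\KF(V)$, not $\KF(U)$); your argument implicitly works with the corrected square, which is clearly the intended one. In short, your proposal is sound; it supplies routine content the paper omitted rather than diverging from an existing proof.
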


\begin{defi} (\textbf{Stalk})
Let $\KE$ be a presheaf of abelian groups on $X$ The Stalk $\KE_{x}$ is the direct limit of the group $\KE(U)$ for all $x \in U \subseteq X$, via the restrictions
maps $p_{UV}$.
\end{defi}

\begin{defi} (\textbf{Sheaves})
A sheaf of abelian groups $F$ on $X$ is a presheaf of abelian groups on $X$ satisfying the two following requirements:

\begin{itemize}
  \item{
      \textbf{Locally axiom}, let $\{U_{i}\}_{i \in I}$ be an open cover of the open set $U$ and let $s$ be a section of $F$ over $U$. If the restrictions of $s$ to $U_{i}$ all vanish (i.e., one has $s|_{U_{i}}=0$ for all $i$, then $s=0$).
    }
  \item{
      \textbf{Gluing axiom}, let $\{U_{i}\}_{i \in I}$ be an open cover of the open set $U$. Given sections $s_{i}$ over $U_{i}$ matching on the intersections $U_{ij} = U_{i} \cap U_{j}$, (i.e., $s_{i}|_{U_{i} \cap U_{j}} = s_{j}|_{U_{i} \cap U_{j}}$), then there is a section $s$ of $F$ over $U$ satisfying $s|_{U_{i}} = s_{i}$.
    }
\end{itemize}
\end{defi}

\begin{figure}[H]
  \begin{tikzpicture}
    \node (a) at (0,0) {$0$};
    \node (b) at (2,0) {$F(U)$};
    \node (c) at (4.5,0) {$\prod_{i}F(U_{i})$};
    \node (d) at (8,0) {$\prod_{ij}F(U_{i} \cap U_{j})$};
    \path[->,font=\scriptsize,>=angle 90]
    (a) edge node[above] {} (b);
    \path[->,font=\scriptsize,>=angle 90]
    (b) edge node[above] {$\alpha$} (c);
    \path[->,font=\scriptsize,>=angle 90]
    (c) edge node[above] {$p$} (d);
  \end{tikzpicture}
\end{figure}

Let $\phi : \KE \longrightarrow \KF$ a morphism, which induces $\phi_{x} : \KE_{x} \longrightarrow \KF_{x}$ for all $x \in X$. If $\KE, \KF$ are sheaves then $\phi$ is an isomoprhism if and only if $\phi_{x}$ is an isomorphism for all $x \in X$. The reader can read more about sheaves in the sense of manifold with corners in \cite{huma}, \cite[\textsection 4]{joyce3}, \cite[\textsection 3]{joyce4}.

\section{Poisson Structure}
Since manifolds with corners allows us to define a structure sheaf on it, we can describe a poisson structure in the following way

\begin{theo}
\label{prop3.1}
A poisson structure on $(X, \OO)$, where $X$ is a manifold with corners, is a sheaf morphism
\begin{equation}
  \label{fith_eq}
  \{-, -\} = \OO \times \OO \longrightarrow \OO
\end{equation}
that is a derivation (satisfies the Leibniz rule) in each argument and also satisfies the Jacobi Identity.
\end{theo}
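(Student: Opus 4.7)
The plan is to establish the equivalence between the sheaf-theoretic description in \eqref{fith_eq} and the classical bivector-field description from equation \eqref{eq:b1}, adapted to the corners setting of Definition \ref{d:firstd}. Working in either direction reduces to a local problem on charts of the maximal atlas, so I would proceed chart-by-chart and then patch using the gluing axiom.

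For the forward direction I would start from a smooth bivector field $\pi$ on $X$ (in the sense of Joyce \cite{joyce1}, so a section of $\bigwedge^{2} TX$) satisfying $[\pi,\pi]_{s}=0$ and tangent to every boundary stratum. On a chart $(U,\phi)$ with $U\subseteq X$ modeled on $\RR^{n}_{k}$, the formula $\{F,G\}_{U}(m)=\langle d_{m}F\wedge d_{m}G,\pi_{m}\rangle$ defines an $\RR$-bilinear operation on $\OO(U)\times\OO(U)$ whose output lies in $\OO(U)$ precisely because of this tangency condition. The local brackets commute with the restriction maps $p^{U}_{V}$ since $d$ is compatible with the restriction of functions; by Lemma \ref{l:secondl} this assembles to a morphism of presheaves $\OO\times\OO\to\OO$, and since $\OO$ is a sheaf, to a sheaf morphism. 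The Leibniz rule in each argument follows from the Leibniz rule for $d$, and the Jacobi identity is equivalent, on each chart, to $[\pi,\pi]_{s}=0$; the resulting equation, holding on every member of an open cover, is promoted to an identity of sheaf morphisms on $X$ by the locality axiom.

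For the reverse direction, suppose $\{-,-\}$ is a sheaf morphism satisfying Leibniz and Jacobi. The Leibniz rule in the second slot implies that for each $G\in\OO(U)$ the map $F\mapsto\{F,G\}_{U}$ is a derivation of $\OO(U)$, hence represented by a smooth vector field, and antisymmetry (a consequence of Jacobi together with Leibniz) turns this assignment into a bivector $\pi_{U}$ on each chart. Compatibility of the $\pi_{U}$ on overlaps is forced by the commutative diagram in Lemma \ref{l:secondl}, and the gluing axiom for $\bigwedge^{2}TX$ produces a global bivector $\pi$ on $X$; the Jacobi identity for $\{-,-\}$ then translates, via the classical computation on each chart, into $[\pi,\pi]_{s}=0$.

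The main obstacle I expect is boundary admissibility. On a smooth manifold the bracket of two smooth functions is automatically smooth, but on $X$ modeled on $\RR^{n}_{k}$ the bracket of two admissible functions must itself be admissible, which forces $\pi$ to be tangent to each $\partial^{j}X$. Verifying this tangency, using Joyce's definition of smooth map and the corner decomposition \eqref{thirdeq} together with the fibre-product structure of Lemma \ref{l:firstl}, is the genuinely new geometric content beyond the smooth case, and I would expect the bulk of the work to sit there.
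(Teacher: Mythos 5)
Your proposal and the paper's own proof are doing very different things. The paper never constructs a bracket at all: its ``proof'' only verifies that $\OO_{X}$ is a sheaf of $C^{\infty}$-rings on $X$ (sections are admissible smooth functions, restriction maps $p_{UV}$ commute as in Lemma \ref{l:secondl}) and that a weakly smooth map induces a morphism $f_{\#}:\OO_{Y}\to f_{*}(\OO_{X})$ by precomposition. Neither the Leibniz rule nor the Jacobi identity is ever checked, and no bivector appears; in effect the statement is being used as a \emph{definition}, and the argument only establishes that the ambient sheaf-theoretic framework makes sense on a manifold with corners. You instead read the statement as an equivalence between the sheaf morphism \eqref{fith_eq} and the bivector description \eqref{eq:b1}, and set out to prove both directions. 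That is a genuinely stronger claim than anything the paper establishes, and your identification of the real geometric content --- that $\pi$ must be tangent to the boundary strata so that $\{F,G\}$ remains admissible --- is precisely the point the paper never addresses.

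That said, your proposal has two gaps of its own. First, the tangency/admissibility step, which you correctly flag as carrying the bulk of the work, is only announced, not executed; neither Lemma \ref{l:firstl} nor the boundary formula \eqref{thirdeq} as stated in the paper gives you the tools to run it, so you would have to import the relevant material on vector fields tangent to $\partial^{j}X$ from \cite{joyce1} and prove the tangency claim there, chart by chart on $\RR^{n}_{k}$. Second, in the reverse direction you assert that antisymmetry of $\{-,-\}$ is ``a consequence of Jacobi together with Leibniz.'' This is not a formal consequence: a biderivation decomposes into symmetric and antisymmetric parts, and the Jacobi identity does not obviously kill the symmetric part, so without a proof (or without adding skew-symmetry to the hypotheses, as the standard definition does and as the paper's statement conspicuously omits) your local bivector $\pi_{U}$ is not well defined and the gluing step has nothing to glue.
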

\begin{proof}
  Let $X$ be a manifold with corners and $\OO_{x}$ a ring-valued sheaf (structure sheaf) on $X$, then $\OO_{x}(U)$ can be interpreted as the ring of admissible smooth functions on an open subset $U \subset X$ (i.e., $C^{\infty}$-ring $\OO_{x}(U)$) by Lemma \ref{l:firstl}, for each open sets $V \subseteq U \subseteq X$ we are given a morphism of $C^{\infty}$-rings $p_{UV} : \OO_{x}(U) \longrightarrow \OO_{x}(V)$, which by Definition \ref{d:fifthd} is the restriction map by Lemma \ref{l:secondl}, then $\OO_{x}$ is a sheaf of $C^{\infty}$-rings on $X$.

  Let $X$ and $Y$ be manifolds with corners and $F: X \longrightarrow Y$ a weakly smooth map in the sense of \cite[\textsection 3]{joyce1}. Define $(X, \OO_{X})$ and $(Y, \OO_{Y})$, for all open $U \subseteq Y$ we define a morphism of sheaves of $C^{\infty}$-rings on $Y$ in the following way 

  $$
  f_{\#}(U) : \OO_{Y}(U) = C^{\infty}(U) \rightarrow \OO_{X}(f^{-1}(U)) = C^{\infty}(f^{-1}(U))
  $$

  by $f_{\#}(U) : c \mapsto c \circ f$ for all $c \in C^{\infty}(U)$, and $f_{\#} : \OO_{Y} \longrightarrow f_{*}(\OO_{X})$ is a morphism of sheaves of $C^{\infty}$-rings on $Y$.
\end{proof}

Let $f, g \in \OO_{X}$ be smooth functions and $s$ is a local section of $F$, where $F$ is the bracket $\{-, -\} : \OO_{X} \times \OO_{X} \longrightarrow \OO_{X}$ , then by Theorem \hyperref[prop3.1]{3.1} the bracket is a derivation in the first argument and clearly satisfies the Leibniz identity
$$
\{f, gs\} = \{f, g\}s + g . \{f, s\}
$$

\section{Example}
Let $X$ be a manifold with corners, we define a $C^{\infty}$-ringed space $\underline{X} = (X, \OO_{X})$ and $\OO_{X}(U) = C^{\infty}(U)$ for each $U \subseteq X$, where $C^{\infty}(U)$ is the $C^{\infty}$-ring of smooth maps $f: U \longrightarrow \RR$, and if $V \subseteq U \subseteq X$ are open we define $p_{UV}: C^{\infty}(U) \rightarrow C^{\infty}(V)$ by $p_{UV}: f \mapsto f|_{V}$.

For each $x \in X$, the Stalk $\OO_{X, x}$ is the $C^{\infty}$-local ring of germs $[(f, U)]$ of smooth functions $f: X \longrightarrow \RR$ at $x \in X$, with a unique maximal ideal $\mathfrak{m}_{X, x}: \{[(f, U)] \in \OO_{X, x}: f(x) = 0$ \} amd $\OO_{X, x}/\mathfrak{m}_{X, x} \cong \RR$. Hence $\underline{X}$ is a local $C^{\infty}$-ringed space, so we can define a poisson structure in the sense of Theorem \hyperref[prop3.1]{3.1}.

\section{Conclusions}
A $C^{\infty}$-ringed space $(X, \OO_{X})$ allows us to define smooth functions where we can define a poisson structure (which is a sheaf morphism) on a manifold with corners in $n$ dimensions. Thanks to the definition of manifolds with cornes given by Joyce, we can adopt aplications in symplectic theory, where we can define a poisson structure without problems.	


\def\arXiv#1{\href{http://arxiv.org/abs/#1}{arXiv:#1}}

\end{document}